\newcommand{\Max}{\max\limits}
\DeclareMathOperator*{\AArgmax}{Argmax}
\newcommand{\Argmax}{\AArgmax\limits}
\DeclareMathOperator*{\diam}{diam}
\renewcommand{\le}{\leqslant}
\renewcommand{\ge}{\geqslant}
\newtheorem{theorem}{Theorem}
\newtheorem{stat}{Proposition}
\newtheorem{lemma}{Lemma}
\newenvironment{system}[1]{\left\{ \begin{array} {#1}}{\end{array} \right.}
\newcommand{\NN}{\mathbb{N}}
\newcommand{\txt}[1]{{\scriptsize\mbox{\textnormal{#1}}}}
\begin{document}
\title{Sharp bounds for the number of maximal independent sets in trees of fixed diameter}

\author{
Alexander B. Dainiak\thanks{
Supported by RFBR grant 07-01-00444.}\\
Department of Computational Mathematics and Cybernetics \\
Lomonosov Moscow State University \\
}

\maketitle
\thispagestyle{empty}

\begin{abstract}
We obtain sharp lower and upper bounds for the number of maximal (under inclusion) independent sets in trees with fixed number of vertices and diameter.
All extremal trees are described up to isomorphism.
\end{abstract}

\subsection*{Introduction}
A subset of pairwise disjoint vertices of a graph is called an \emph{independent set}. We call and independent set a \emph{maximal independent set} (m.\,i.\,s.) if it is
not contained in an independent set of greater size.
Let $i_M(G)$ denote the number of m.i.s. in $G$, and $n(G)$ denote the number of vertices of $G$. The sets of vertices and edges of a graph $G$ are denoted by $V(G)$ and $E(G)$ respectively.
Let $\partial v$ denote the set of all neighbors of $v$. Write $G\setminus S$ for the subgraph of $G$, generated by a set $V(G)\setminus S$.

The number of edges in a path is called its \emph{length}. A \emph{diameter} of a tree $T$ is the maximal length $\diam(T)$ of a path in $T$.
Let $d,\,n\in\NN$, and let $d<n$. Every tree of diameter $d$ on $n$ vertices, having the minimal (maximal) number of m.\,i.\,s. among all trees of the same diameter and number of vertices,
is called the \emph{$(n,d)_{\txt{m.i.s.}}$--minimal} (respectively, \emph{$(n,d)_{\txt{m.i.s.}}$--maximal}).

The problem of counting independent sets in trees of fixed diameter was addressed in \cite{Pedp}, where sharp upper bound was provided for the number of all (not necessarily maximal)
independent sets in trees with a given number of vertices. The extremal trees were characterized up to isomorphism. In the same paper the problem of finding lower bounds for the number
of independent sets in trees of fixed diameter was raised. Although the latter problem is still open, some progress was achieved in \cite{Frendp} and \cite{DainTp}.
\par The purpose of the current report is to solve the similar problem for maximal independent sets. The lower bound is almost trivial, while the upper bound requires some effort.
Sharp upper bounds for the number of m.\,i.\,s. in trees without any restrictions on diameter was obtained by H.\,S.\,Wilf in \cite{Wilfpaper}, and B.\,Sagan in \cite{Saganpaper2}
described the structure of extremal trees. Below we provide the complete characterization of $(n,d)_{\txt{m.i.s.}}$--maximal trees, thus generalizing the results of Wilf and Sagan.

By $\psi_n$ we denote the number of vertices in a path on $n$ vertices. The sequence $\psi_n$, obviously, can be defined by the recurrence $\psi_n=\psi_{n-2}+\psi_{n-3}$ and the initial
conditions $\psi_0=\psi_1=1,\,\psi_2=2$. The following table shows the values of $\psi_n$ for small $n$:
\begin{center}
\begin{tabular}{|c|c|c|c|c|c|c|c|c|c|c|c|c|c|c|c|c|c|c|c|c|c|}\hline
$n$&$0$&$1$&$2$&$3$&$4$&$5$&$6$&$7$&$8$&$9$&$10$&$11$&$12$&$13$&$14$&$15$\\ \hline
$\psi_n$&$1$&$1$&$2$&$2$&$3$&$4$&$5$&$7$&$9$&$12$&$16$&$21$&$28$&$37$&$49$&$65$\\ \hline
\end{tabular}
\end{center}

\subsection*{Bounds for the number of maximal independent sets}
Let $U=\{u_1,\,\ldots,\,u_{d-1}\}$, $V=\{v_1,\,\ldots,\,v_p\}$, $W=\{w_1,\,\ldots,\,w_q\}$.
By $B_{d,p,q}$ we denote the tree on vertex set $U\cup V\cup W$ such that its subtrees generated by $\{u_1\}\cup V$, $\{u_{d-1}\}\cup W$ and $U$ are isomorphic to
$K_{1,p}$, $K_{1,q}$ and $P_{d-1}$ respectively. Note that $B_{d,1,1}\simeq P_{d+1}$. It is easy to check that for all $p,q$ we have $i_M(B_{d,p,q})=i_M(P_{d+1})=\psi_{d+1}$.

\begin{theorem}
Let $d\ge3$ and let $T$ be an $(n,d)_{\txt{m.i.s}}$--minimal tree. Then $T\simeq B_{d,p,q}$ for some $p,q$.
\end{theorem}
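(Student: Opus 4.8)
The plan is to prove the sharper statement that every tree $T$ of diameter $d\ge3$ on $n$ vertices satisfies $i_M(T)\ge\psi_{d+1}$, with equality exactly when $T\simeq B_{d,p,q}$; since the double brooms attain $\psi_{d+1}$, this pins them down as the minimal trees. I would induct on $n$ for fixed $d$, the base $n=d+1$ being the unique tree $P_{d+1}=B_{d,1,1}$. I fix once and for all a diametral path $x_0x_1\cdots x_d$; being a longest path, its ends $x_0,x_d$ are leaves, and every neighbor of $x_1$ (resp.\ of $x_{d-1}$) lying off the path is a leaf, since otherwise the path could be lengthened.

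The first tool is a twin-leaf lemma: if a vertex $u$ has at least two leaf-neighbors, then deleting one of them preserves both $\diam(T)$ and $i_M(T)$. Indeed, the states of the leaves at $u$ are slaved to that of $u$ — all excluded when $u$ is in the set, all included when $u$ is out — which yields an explicit bijection between the maximal independent sets of $T$ and of the smaller tree. One may always delete a leaf off the diametral path, as $x_0$ and $x_d$ sit at the distinct vertices $x_1\ne x_{d-1}$, so the diametral path (hence $\diam=d$) survives. This lemma is also exactly what underlies the stated identity $i_M(B_{d,p,q})=\psi_{d+1}$.

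The second tool is a uniform lower bound. Let $P$ denote the diametral path. I would define an injection $\Phi$ from the maximal independent sets of $P$ to those of $T$: given a maximal independent set $J$ of $P$, freeze the membership of all path vertices at their value in $J$ and, running through the off-path vertices in a fixed order, add each one having no already-chosen neighbor. The set $\Phi(J)$ is independent, and it is maximal because the off-path vertices were exhausted greedily while every path vertex outside $J$ already has a path-neighbor in $J$. Since $\Phi(J)$ agrees with $J$ on $\{x_0,\dots,x_d\}$, the map is injective, giving $i_M(T)\ge\psi_{d+1}$. Crucially, every set in the image of $\Phi$ restricts on $P$ to a maximal independent set of $P$.

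It remains to force equality into the double-broom shape, within the induction. If some vertex carries two leaves, the twin-leaf lemma passes to a diameter-$d$ tree $T'$ on $n-1$ vertices with $i_M(T')=i_M(T)$, and the inductive hypothesis returns the conclusion (a twin leaf re-attached to a double broom is again a double broom). Otherwise no vertex has two leaves, whence $\deg x_1=\deg x_{d-1}=2$ and the surplus vertices must hang, through some branch, at an interior vertex $x_i$ with $2\le i\le d-2$ (so this case is vacuous when $d=3$). Choosing an off-path neighbor $y$ of such an $x_i$, I would force $\{y,x_{i-2},x_{i+2}\}$ into an independent set, forbid $x_{i-1},x_i,x_{i+1}$, and complete greedily to a maximal independent set $I$; then $x_i$ is dominated only by $y$, so $I$ restricted to $P$ fails to dominate $x_i$ and lies outside the image of $\Phi$, giving $i_M(T)\ge\psi_{d+1}+1$. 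This last step is the main obstacle: showing that any branching away from the two path-ends strictly increases the count. The injection conveniently reduces it to producing one extra set, but one must verify that this forced/forbidden pattern around $x_i$ really completes to a maximal independent set — in particular that $x_{i-1},x_{i+1}$ stay dominated by $x_{i-2},x_{i+2}$ — with the boundary indices $i=2,\,d-2$ (where $x_{i-2}$ or $x_{i+2}$ is a leaf-end of $P$) handled separately.
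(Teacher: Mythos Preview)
Your proposal is correct and follows essentially the paper's argument: an injection from the maximal independent sets of the diametral path $P$ gives $i_M(T)\ge\psi_{d+1}$, and for any non-double-broom one locates an interior path vertex $x_i$ (with $2\le i\le d-2$) having an off-path neighbor $y$, then builds a m.i.s.\ of $T$ containing $y$ but none of $x_{i-1},x_i,x_{i+1}$, whose trace on $P$ is non-maximal and hence lies outside the injection's image. Your induction-with-twin-leaf reduction is a harmless detour that the paper omits: the existence of such an $x_i$ follows directly from $T\not\simeq B_{d,p,q}$ together with the fact (which you already noted) that every off-path neighbor of $x_1$ or $x_{d-1}$ is a leaf, so no reduction to the twin-leaf-free case is needed.
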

\begin{proof}
Let $d\ge3$ and let $T$ be an $(n,d)_{\txt{m.i.s.}}$--minimal tree. Consider a diametrical path $P$ of $T$. Every m.\,i.\,s. in $P$ is contained in at least one m.\,i.\,s. in $T$,
and different m.\,i.\,s. in $P$ are contained in different m.\,i.\,s. of $T$. Thus $i_M(T)\ge i_M(P)=\psi_{d+1}$.
\par Assume there are no $p,q$ such that $T\simeq B_{d,p,q}$. Then there is a vertex $u$ in $P$, which is not adjacent to either of ends of $P$, but \emph{is} adjacent to some
$v\in V(T)\setminus V(P)$. Let $F$ be a subgraph of $T$ generated by a set $V'=\{v\}\cup(V(P)\setminus (\{u\}\cup\partial u))$.
Consider a m.\,i.\,s. $S$ in  $F$ that contains $v$. Note that we can add some vertices from $V(T)\setminus (V(P)\cup\{v\})$ to $S$ to obtain some m.\,i.\,s. in $T$,
and at the same time $S\cap V(P)$ is not a m.\,i.\,s. of $P$. But it implies $i_M(T)>i_M(P)$ which contradicts the assumption of $(n,d)_{\txt{m.i.s.}}$--minimality of $T$.
\end{proof}

\begin{stat}\label{stAllLeavesSame}
Let $T$ be an arbitrary tree. Let $T$ contain a vertex adjacent to two or more leaves, and let $u$ be one of these leaves. Then for a tree $T'$, obtained from $T$ by
removal of $u$, we have $i_M(T')=i_M(T)$.
\end{stat}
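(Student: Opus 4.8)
The plan is to exhibit an explicit bijection between the maximal independent sets of $T$ and those of $T'$, given by deletion of the vertex $u$. Write $w$ for the neighbour of $u$; by hypothesis $w$ is adjacent to a second leaf, which I denote $u'$, so that $\partial u=\partial u'=\{w\}$ and $u'\in V(T')$. The first thing I would record is a structural dichotomy valid in any tree in which $w$ is adjacent to a leaf: in every m.\,i.\,s.\ $S$, either $w\in S$, in which case no neighbour of $w$ lies in $S$ and in particular the attached leaves are excluded, or $w\notin S$, in which case every leaf attached to $w$ must belong to $S$ --- otherwise such a leaf, whose only neighbour $w$ is outside $S$, could be added to $S$, contradicting maximality. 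Applied to $T$ this gives $u\in S\iff u'\in S\iff w\notin S$; applied to $T'$ it gives $u'\in S'\iff w\notin S'$.

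Next I would define $\phi(S)=S\setminus\{u\}$ and verify that $\phi$ maps m.\,i.\,s.\ of $T$ to m.\,i.\,s.\ of $T'$. Independence is immediate, since $T'$ is a subgraph of $T$ and we only drop a vertex. For maximality the one thing to check is that no vertex $x\in V(T')\setminus\phi(S)$ can be added to $\phi(S)$. Since the only vertex adjacent to $u$ is $w$, every $x\neq w$ has the same neighbourhood in $T$ and in $T'$, hence retains a neighbour in $S\setminus\{u\}$ witnessing its exclusion; and $w$ itself cannot be added to $\phi(S)$ because in the relevant case $w\notin S$ the leaf $u'$ lies in $S\setminus\{u\}$ and blocks it. This last point --- that $u'$ acts as a backup dominator keeping $w$ out --- is the crux of the argument and the step I expect to require the most care.

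Finally I would establish that $\phi$ is a bijection. Injectivity follows from the dichotomy: membership of $u$ in $S$ is forced by membership of $u'$ (equivalently by the exclusion of $w$), and both $u'$ and $w$ survive in $\phi(S)$, so $S$ is recovered from $\phi(S)$. For surjectivity, given a m.\,i.\,s.\ $S'$ of $T'$ I would set $S=S'$ when $w\in S'$ and $S=S'\cup\{u\}$ when $w\notin S'$ (the latter being exactly the case $u'\in S'$), and check by the same local analysis that $S$ is a m.\,i.\,s.\ of $T$ with $\phi(S)=S'$. Counting the two sides of the bijection then yields $i_M(T')=i_M(T)$, as claimed.
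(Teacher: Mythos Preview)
Your proof is correct and follows essentially the same approach as the paper: the paper's one-line argument is precisely your dichotomy (all leaves sharing a common neighbour lie in a m.\,i.\,s.\ together or not at all), from which the bijection $S\mapsto S\setminus\{u\}$ is immediate. You have simply made explicit the verification of maximality and of bijectivity that the paper leaves to the reader.
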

\begin{proof}
It is sufficient to note that if $u_1,\,\ldots,\,u_r$ are the leaves of $T$ having the common neighbor, then any m.\,i.\,s. of $T$ either contains all vertices $u_1,\,\ldots,\,u_r$,
or contains none of them.
\end{proof}

\begin{lemma}\label{lemOneLeafOnly}
For all $n$ and $d$ such that $4\le d<n$ every $(n,d)_{\txt{m.i.s.}}$--maximal tree contains no vertices adjacent to two or more leaves.
\end{lemma}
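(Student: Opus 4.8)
The plan is to argue by contradiction: assume $d\ge4$, that $T$ is $(n,d)_{\txt{m.i.s.}}$--maximal, and that some vertex $v$ is adjacent to two leaves $u_1,u_2$. First I would delete $u_1$. By Proposition~\ref{stAllLeavesSame} the resulting tree $T'$ satisfies $i_M(T')=i_M(T)$; moreover $T'$ has $n-1$ vertices and still $\diam(T')=d$, since any diametrical path of $T$ running through $u_1$ can be rerouted through the surviving leaf $u_2$ (so no length is lost, and none can be gained). It then suffices to hang one new leaf on $T'$ at a cleverly chosen vertex $w$ so that the new tree $T''$ has $n$ vertices, $\diam(T'')=d$, and $i_M(T'')>i_M(T')=i_M(T)$; this contradicts maximality. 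Thus the whole problem reduces to locating such a $w$.

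The second step is to decide exactly when adding a leaf helps. Letting $T''$ be $T'$ with a new leaf $\ell$ attached at $w$ and applying the leaf recurrence at $\ell$ gives $i_M(T'')=i_M(T'\setminus\{w\})+i_M\bigl(T'\setminus(\{w\}\cup\partial w)\bigr)$, while partitioning the m.\,i.\,s. of $T'$ by whether they contain $w$ gives $i_M(T')=i_M\bigl(T'\setminus(\{w\}\cup\partial w)\bigr)+r$, where $r$ counts the m.\,i.\,s. of $T'$ that avoid $w$. Subtracting, and noting that the m.\,i.\,s. of $T'$ avoiding $w$ are precisely those m.\,i.\,s. of $T'\setminus\{w\}$ containing some neighbor of $w$, one finds that $i_M(T'')-i_M(T')$ equals the number of m.\,i.\,s. of $T'\setminus\{w\}$ containing \emph{no} neighbor of $w$. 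As $T'$ is a tree, $T'\setminus\{w\}$ breaks into one component per neighbor of $w$, and such an m.\,i.\,s. exists iff every neighbor of $w$ has degree at least $2$ in $T'$ (a degree-one neighbor is isolated after deleting $w$ and so is forced into every m.\,i.\,s.). Hence attaching a leaf at $w$ strictly increases $i_M$ if and only if $w$ has no leaf among its neighbors.

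It remains to find $w$ that is free of leaf-neighbors (for the strict increase) and whose eccentricity $e(w)$, the greatest distance from $w$ to a vertex of $T'$, satisfies $e(w)\le d-1$ (so that the new leaf, lying at distance $1+e(w)\le d$ from everything, keeps the diameter equal to $d$, using $T'\subseteq T''$ for the reverse inequality). Take a central vertex $c$ of $T'$, so that $e(c)$ equals the radius $\lceil d/2\rceil\le d-1$. If $c$ has no leaf-neighbor, put $w=c$. Otherwise pick a leaf $z$ adjacent to $c$: since $d\ge4$ forces $\deg c\ge2$, the vertex $z$ has no leaf-neighbor, and $e(z)=1+\lceil d/2\rceil\le d-1$ (again because $d\ge4$), so put $w=z$. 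Either choice yields the desired $T''$ and the contradiction.

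I expect the heart of the matter to be this last step. Strict growth of $i_M$ pushes $w$ toward vertices with leaf-free neighborhoods, whereas keeping the diameter fixed pushes $w$ toward the centre; in caterpillar-like trees every interior vertex may carry a pendant leaf, so no interior vertex works, and the key realization is that a \emph{leaf} sitting at the centre (the vertex $z$ above) simultaneously meets both demands. Making the eccentricity bookkeeping for this central leaf close for every $d\ge4$ is the one place that genuinely uses the hypothesis $d\ge4$.
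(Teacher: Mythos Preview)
Your proof is correct and follows essentially the same strategy as the paper: delete one of the twin leaves (invoking Proposition~\ref{stAllLeavesSame}), then re-attach a leaf at a vertex with no leaf-neighbour and eccentricity at most $d-1$ to force $i_M$ strictly up without changing $n$ or $d$. Your argument is in fact more detailed than the paper's --- you explicitly compute $i_M(T'')-i_M(T')$ and locate $w$ via the centre, whereas the paper simply asserts the existence of a suitable vertex and the strict increase.
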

\begin{proof}
Let us assume that for some $d\ge4$ there is a $(n,d)_{\txt{m.i.s.}}$--maximal tree $T$, which has a vertex adjacent to two or more leaves. By removing one of these leaves,
we get a tree $T'$, that, by proposition \ref{stAllLeavesSame}, has $i_M(T')=i_M(T)$. Moreover, $\diam(T')=\diam(T)$ and $n(T')=n(T)-1$.
Observe that in every tree of diameter at least four, there exists a vertex that is not adjacent to any leaf and that is either not leaf or is a leaf not lying on the diametrical path.
Let $v$ be such a vertex in $T'$. By adding a new leaf $u$ to $T'$ and connecting it to $v$ we get the tree $T''$, that has $n(T'')=n(T)$, $\diam(T'')=\diam(T)$ and $i_M(T'')>i_M(T)$.
This contradiction completes the proof.
\end{proof}

For natural $n,\,d$ such that $4\le d\le n-1$, define a function $M(n,d)$:
$$
M(n,d)=\begin{system}{ll}\psi_{d-1}+(2^{(n-d+1)/2}-1)\psi_{d-2}, & \mbox{ for } d\ge4,\,n-d=2k+1,\,k\ge0, \medskip\\ \psi_{d-2}+\psi_d, & \mbox{ for } d\ge4,\,n-d=2, \medskip\\ 2^{(n-d)/2}\psi_{d-1}, & \mbox{ for } d\in\{5,6\}\cup[8,\infty],\,n-d=2k,\,k\ge2, \medskip\\ 2^{(n-d)/2}\psi_{d-1}+1, & \mbox{ for } d\in\{4,7\},\,n-d=2k,\,k\ge2. \end{system}
$$

\begin{stat}\label{stPreArgmax}
$\,\,$
\begin{enumerate}
\item[$1)$] For $d\ge4$ and all $n,\,n\ge d+3$, such that $2\nmid(n-d)$, we have $M(n,d)>M(n,d+1)$.
\item[$2)$] For $d\ge4$ and all $n,\,n\ge d+2$, such that $2\mid(n-d)$, we have $M(n,d)\le M(n,d+1)$, and $M(n,d)=M(n,d+1)$ only if $d=4$.
\item[$3)$] For $d\ge4$ and all $n,\,n\ge d+3$, we have $M(n,d)\ge M(n,d+2)$, and $M(n,d)=M(n,d+2)$ only if $d=5$ and $n$ is even.
\end{enumerate}
\end{stat}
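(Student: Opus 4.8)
The plan is to reduce every one of the three comparisons to a difference of two values of $M$ which, after substituting the defining formulas and applying the recurrence $\psi_m=\psi_{m-2}+\psi_{m-3}$, collapses into a short expression in a few consecutive terms of $\psi$, whose sign is then read off from the monotonicity of the sequence. Throughout I will use two elementary facts, both immediate from the recurrence and the table: $\psi$ is non-decreasing, and it is strictly increasing from index $3$ onward, the only coincidences being $\psi_0=\psi_1$ and $\psi_2=\psi_3$; moreover $\psi_{m+1}<2\psi_m$ for $m\ge1$. These pin down exactly where the inequalities degenerate to equalities. I write $[\,\cdot\,]$ for the indicator of a condition, so that the $+1$ terms in the definition of $M$ are encoded by $[d\in\{4,7\}]$.

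The first task in each part is to decide which branches of the piecewise definition apply, which is governed by the parity of $n-d$ together with the special regimes $n-d=2$, the correction at $d\in\{4,7\}$, and $d\in\{5,6\}\cup[8,\infty)$. In part $1)$ the quantity $n-d$ is odd, so $M(n,d)$ comes from the first branch while $M(n,d+1)$, having even $n-d-1$, comes from the second branch when $n-d=3$ and from the third or fourth when $n-d\ge5$. In part $2)$ the roles are reversed. In part $3)$ both arguments have the same parity, so $M(n,d)$ and $M(n,d+2)$ lie in matching branches, except that the boundary value $n-(d+2)=2$ forces the second branch for the larger argument. In each regime I substitute, abbreviate the relevant power $2^{(n-d)/2}$, and simplify.

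The resulting expressions are short. For part $2)$ with $n-d\ge4$ one finds $M(n,d+1)-M(n,d)=\psi_d-\psi_{d-1}-[d\in\{4,7\}]$, nonnegative and vanishing exactly at $d=4$ (since $\psi_4-\psi_3=1$ cancels the correction), while the boundary case $n-d=2$ gives $\psi_{d-1}-\psi_{d-2}$, zero only at $d=4$ via $\psi_2=\psi_3$. For part $3)$ the odd regime yields $M(n,d)-M(n,d+2)=(2^{(n-d-1)/2}-1)(\psi_{d-2}-\psi_{d-3})$, nonnegative and zero precisely when $d=5$, whence $n$ is even; the even regime yields $2^{(n-d)/2-1}(\psi_{d-1}-\psi_{d-2})$ adjusted by the difference of the two corrections, which stays strictly positive. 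For part $1)$ the generic difference reduces to $\psi_{d-4}+(2^{(n-d-1)/2}-1)(\psi_{d-2}-\psi_{d-3})-[d=6]$ and the boundary case to $2\psi_{d-2}-\psi_{d-1}$, both strictly positive by the facts above.

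The main obstacle is bookkeeping rather than ideas: one must track the $+1$ corrections at $d\in\{4,7\}$ and the branch switches at $n-d\in\{2,4\}$ with care, because it is exactly the interplay of these corrections with the coincidence $\psi_2=\psi_3$ and the cancellation $\psi_4-\psi_3=1$ that produces the genuine equalities ($d=4$ in part $2)$, and $d=5$ with $n$ even in part $3)$) and that must be shown \emph{not} to create any spurious ones. The delicate spots are where a correction narrows an otherwise-strict gap: $d=7$ in part $2)$ (leaving $\psi_7-\psi_6-1=1>0$), $d=5$ in the even regime of part $3)$ (leaving $2^{(n-7)/2}-1>0$ for $n\ge9$), and $d=6$ in part $1)$. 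I would therefore organize the write-up as a table of regimes, verify the handful of genuinely small cases $d\in\{4,5,6,7\}$ at the least admissible $n$ directly from the values of $\psi$, and invoke the monotonicity facts uniformly for the remaining infinite families.
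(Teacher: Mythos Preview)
Your plan is correct and follows essentially the same route as the paper: split by the parity of $n-d$ and the branch boundaries $n-d\in\{2,4\}$, substitute the defining formulas for $M$, collapse via $\psi_m=\psi_{m-2}+\psi_{m-3}$, and read off the sign (your $\psi_{d-2}-\psi_{d-3}$ is the paper's $2\psi_{d-2}-\psi_d$, your $\psi_{d-4}$ is its $\psi_{d-1}-\psi_{d-3}$, etc.). Two small remarks: the inequality $\psi_{m+1}<2\psi_m$ is an equality at $m=1$, though you only ever need $m\ge2$; and in part~3) with $n-d$ even the sub-case $n-d=4$ (where $M(n,d+2)$ falls into the $n-(d+2)=2$ branch) is an infinite family in $d$, not a finite check, so you will need the bound $3\psi_{d-1}>2\psi_d$ there, which is of the same flavor as the others.
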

\begin{proof}
$\,\,$
\begin{enumerate}
\item Let $4\le d\le n-3$ and $2\nmid(n-d)$. If $n=d+3$, then $M(n,d)-M(n,d+1)=2\psi_{d-2}-\psi_{d-1}>0$. If $n\ge d+5$ and $d\neq6$, then
$$M(n,d)-M(n,d+1)=\psi_{d-1}-\psi_{d-2}+2^{(n-d-1)/2}(2\psi_{d-2}-\psi_d)\ge\psi_{d-1}+7\psi_{d-2}-4\psi_d>0.$$
If $n\ge d+5$ and $d=6$, then $M(n,d)-M(n,d+1)=2^{(n-7)/2}>0$.
\item If $d=4$ and $n$ is even, then the equality $M(n,d)=M(n,d+1)$ can be easily checked. Let $5\le d\le n-2$ and $2\mid(n-d)$. If $n=d+2$, then $M(n,d+1)-M(n,d)=\psi_{d-1}-\psi_{d-2}>0$.
If $d\neq7$ and $n\ge d+4$, then $M(n,d+1)-M(n,d)=\psi_d-\psi_{d-1}>0$. If $d=7$ and $n\ge d+4$, then $M(n,d+1)-M(n,d)=1>0$.
\item If $4\le d\le n-3$ and $2\nmid(n-d)$, then $M(n,d)-M(n,d+2)=(2^{(n-d-1)/2}-1)(2\psi_{d-2}-\psi_d)$, which implies that $M(n,5)=M(n,7)$, and $M(n,d)>M(n,d+2)$ for $d\neq5$.
\par If $d=4$ and $n$ is even, then $M(n,d)-M(n,d+2)=1>0$. Let $5\le d\le n-3$. For $n=d+4$ we have $M(n,d)-M(n,d+2)\ge3\psi_{d-1}-2\psi_d>0$. If $n\ge d+6$ and $2\mid(n-d)$, then
$$M(n,d)-M(n,d+2)\ge2^{(n-d-2)/2}(2\psi_{d-1}-\psi_{d+1})-1>0.$$
\end{enumerate}
\end{proof}

\par Proposition \ref{stPreArgmax} implies the following.
\begin{lemma}\label{lemArgmax}
If $4\le d'<d''\le n-1$, then
\begin{enumerate}
\item[$1)$] $\Argmax_{d'\le d\le d''}M(n,d)=\begin{system}{ll}\{d'+1\}, & \mbox{ for } d'\ge5,\,2\mid(n-d'), \medskip\\ \{4,5,7\}\cap[d',d''], & \mbox{ for } d'\le5,\,2\mid n, \medskip\\\{d'\}, & \mbox{ for } d'\in\{4\}\cup[6,\infty],\,2\nmid(n-d'). \end{system}$
\item[$2)$] $\Max_{d'\le d\le d''}M(n,d)=\begin{system}{ll}M(n,d'+1), & \mbox{ for } 2\mid(n-d'), \medskip\\ M(n,d'), & \mbox{ for } 2\nmid(n-d'). \end{system}$
\item[$3)$] $\Max_{d\ge d'}M(n,d)\le M(n,4)$.
\end{enumerate}
\end{lemma}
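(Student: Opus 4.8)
The plan is to deduce all three parts from Proposition~\ref{stPreArgmax} by reading it as a statement about two interleaved subsequences. Fix $n$ and regard $M(n,\cdot)$ as a function of $d$ on the integer interval $[d',d'']$. Part~$3$ of the proposition says that along each fixed-parity arithmetic progression the values strictly decrease, the only exception being $M(n,5)=M(n,7)$ when $n$ is even; hence within $[d',d'']$ the restriction of $M(n,\cdot)$ to the even $d$ and its restriction to the odd $d$ are each (essentially strictly) decreasing, so each attains its maximum at the smallest index of its parity class. Since every $d\in[d',d'']$ has the parity of either $d'$ or $d'+1$, and both $d'$ and $d'+1$ lie in the interval (because $d'<d''$), this immediately yields the key reduction
$$\Max_{d'\le d\le d''}M(n,d)=\max\{M(n,d'),M(n,d'+1)\}.$$

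With this reduction in hand, part~$2$ is immediate: comparing the two candidates by Proposition~\ref{stPreArgmax}---its part~$1$ when $2\nmid(n-d')$ (giving $M(n,d')>M(n,d'+1)$) and its part~$2$ when $2\mid(n-d')$ (giving $M(n,d')\le M(n,d'+1)$)---tells us which of $M(n,d')$, $M(n,d'+1)$ is the maximum, exactly as asserted. Part~$3$ then follows by specialising to the full range $d'=4$, $d''=n-1$: for odd $n$ the maximum is $M(n,4)$, while for even $n$ the equality case $d=4$ of part~$2$ of the proposition gives $M(n,4)=M(n,5)$, so the maximum $M(n,5)$ again equals $M(n,4)$; since shrinking the interval can only decrease the maximum, $\Max_{d\ge d'}M(n,d)\le M(n,4)$ for every $d'\ge4$.

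For part~$1$ I would determine the entire $\Argmax$ by asking which indices other than the winning endpoint also attain the maximal value. Away from the endpoints the two parity subsequences are strictly decreasing, so the maximum is attained at a unique index unless one of the boundary equality cases of Proposition~\ref{stPreArgmax} is triggered. The only such coincidence is the chain $M(n,4)=M(n,5)=M(n,7)$ for even $n$, produced by combining the equality $d=4$ of part~$2$ with the equality $d=5$ of part~$3$; this is precisely what forces the three-element answer $\{4,5,7\}\cap[d',d'']$ when $n$ is even and $d'\le5$. In every remaining case---that is, whenever we are not in the regime ($n$ even and $d'\le5$)---the maximum is attained uniquely, at $d'+1$ when $2\mid(n-d')$ (the winner being the opposite-parity neighbour) and at $d'$ when $2\nmid(n-d')$, giving the singletons $\{d'+1\}$ and $\{d'\}$ respectively.

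The conceptual content is light once the reduction above is set up; I expect the only real work to be the bookkeeping of the equality cases, in particular disentangling the three regimes so that the special indices $d'\in\{4,5\}$ with $n$ even are routed to the $\{4,5,7\}$ answer rather than to the generic singletons. I would handle this by treating $2\mid(n-d')$ and $2\nmid(n-d')$ separately, within each splitting off $d'=4$ (respectively $d'=5$) with $n$ even as the lone exception, and checking directly that the listed case-conditions of the lemma partition all pairs $(n,d')$ with $d'\ge4$ consistently with this analysis.
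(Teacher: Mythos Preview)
Your proposal is correct and matches the paper's approach: the paper simply records that the lemma ``follows from Proposition~\ref{stPreArgmax}'' without giving any details, and you have written out precisely the natural deduction the reader is expected to make---reducing to the comparison of $M(n,d')$ with $M(n,d'+1)$ via part~3 of the proposition, then resolving that comparison with parts~1--2, and finally tracking the equality cases $M(n,4)=M(n,5)$ and $M(n,5)=M(n,7)$ (for even $n$) to pin down the $\Argmax$. Your case bookkeeping is accurate and covers all $(n,d')$ consistently with the lemma's three displayed regimes.
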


\begin{lemma}\label{lemMaxTreesDplus2}
For $n-2=d\ge5$ every $(n,d)_{\txt{m.i.s.}}$--maximal tree $T$ is isomorphic to one of the trees on fig. \textcolor{red}{7}, and $i_M(T)=M(n,d)$.
\end{lemma}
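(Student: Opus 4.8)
The plan is to first determine the finitely many possible shapes of $T$. A diametrical path $P=p_0p_1\cdots p_d$ of $T$ contains $d+1$ vertices, so with $n=d+2$ exactly one vertex $v$ lies off $P$; being a tree, $T$ is obtained from $P\simeq P_{d+1}$ by attaching $v$ as a leaf to some $p_i$. Keeping $\diam(T)=d$ forces $1\le i\le d-1$, and Lemma \ref{lemOneLeafOnly} (applicable since $d\ge5$) rules out $i\in\{1,d-1\}$, which would make $p_i$ adjacent to two leaves. Hence $T$ is one of the trees $T_i$ ($2\le i\le d-2$), the spider with legs of lengths $i$, $d-i$ and $1$, and the whole problem reduces to maximizing $i_M(T_i)$.

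Second, I would compute $i_M(T_i)$ by separating a m.i.s. $S$ according to whether $p_i\in S$. If $p_i\in S$, then $v\notin S$ and the two legs, after deletion of their vertices adjacent to $p_i$, independently contribute m.i.s. of $P_{i-1}$ and $P_{d-i-1}$; if $p_i\notin S$, then $v\in S$ (otherwise $v$ is undominated), $p_i$ is dominated by $v$, and the legs contribute m.i.s. of $P_i$ and $P_{d-i}$. This yields
$$i_M(T_i)=\psi_{i-1}\psi_{d-i-1}+\psi_i\psi_{d-i},$$
so in particular $i_M(T_2)=\psi_{d-3}+2\psi_{d-2}=\psi_{d-2}+\psi_d=M(n,d)$. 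Setting $m=i-2$ and $R_d(m)=\psi_m\psi_{d-3-m}+\psi_{m+1}\psi_{d-4-m}$, a telescoping of the consecutive differences $i_M(T_i)-i_M(T_{i+1})$ collapses to $i_M(T_2)-i_M(T_i)=R_d(m)-R_d(0)$, where $R_d(0)=\psi_{d-3}+\psi_{d-4}=\psi_{d-1}$. Thus it suffices to show $R_d(m)\ge\psi_{d-1}$ for $0\le m\le d-4$ and to identify the minimizers.

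The heart of the argument — and the step I expect to be the main obstacle — is this inequality together with its equality analysis. The useful fact is that $R_d$ obeys the $\psi$-recurrence in $d$: expanding $\psi_{d-3-m}$ and $\psi_{d-4-m}$ by $\psi_n=\psi_{n-2}+\psi_{n-3}$ and regrouping gives $R_d(m)=R_{d-2}(m)+R_{d-3}(m)$ for $m\le d-7$, while the symmetry $R_d(m)=R_d(d-4-m)$ reduces everything to $m\le(d-4)/2$. An induction on $d$ then yields $R_d(m)\ge\psi_{d-3}+\psi_{d-4}=\psi_{d-1}$ at once from the two inductive hypotheses, after checking a bounded initial range of $d$ by hand. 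The same recurrence controls equality: $R_d(m)=\psi_{d-1}$ forces equality in both summands, so the minimizer set $E_d$ satisfies $E_d=E_{d-2}\cap E_{d-3}$ up to symmetry. Propagating the explicitly computed small cases through these intersections shows $E_d=\{0,d-4\}$ for all large $d$, with extra minimizers persisting only for small $d\le10$ — exactly the accidents created by the small initial values of $\psi$.

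Finally I would translate back through $i=m+2$ and the isomorphism $T_i\simeq T_{d-i}$. For generic $d$ the only maximizer is $T_2$; the surviving extra minimizers give a second, non-isomorphic extremal tree (a central or near-central attachment) precisely when $d\in\{6,8,10\}$. These are the trees depicted in fig. \textcolor{red}{7}, and in every case $i_M(T)=i_M(T_2)=M(n,d)$, as claimed. The delicate part is keeping the equality bookkeeping correct across the exceptional diameters, where the irregular additional maximizers appear; by contrast the inequality itself is routine once the recurrence $R_d=R_{d-2}+R_{d-3}$ is in hand.
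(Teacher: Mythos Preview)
Your argument is correct and takes a genuinely different route from the paper's. The paper proves the lemma by a short induction on $d$: after verifying $5\le d\le8$ by hand, for $d\ge9$ it peels off the two (respectively three) vertices at the end of the diametrical path furthest from the extra vertex $v$, uses the standard recurrence
\[
i_M(T)=i_M(T\setminus\{u,u'\})+i_M(T\setminus\{u,u',u''\})\le M(n-2,d-2)+M(n-3,d-3)=M(n,d),
\]
and reads off from the induction hypothesis that equality forces both subtrees to lie in fig.~7, hence $T$ does too. Your approach instead enumerates all candidate trees $T_i$, obtains the closed formula $i_M(T_i)=\psi_{i-1}\psi_{d-i-1}+\psi_i\psi_{d-i}$, and reduces the maximization to the inequality $R_d(m)\ge\psi_{d-1}$ via a telescoping identity, which you then prove by the same $\psi$-recurrence in $d$.

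What each buys: the paper's proof is shorter and dovetails with the inductive machinery used throughout the main theorem (the recurrence on the end of the path and the identity $M(n-2,d-2)+M(n-3,d-3)=M(n,d)$ reappear there). Your approach is more explicit---it yields the exact value of $i_M$ for every candidate, not just the extremal ones---and locates the exceptional diameters $d\in\{6,8,10\}$ directly from the arithmetic of $\psi$, rather than as a by-product of the base cases and the intersection $E_d=E_{d-2}\cap E_{d-3}$. Your bookkeeping is accurate: indeed $T_3$ (for $d=6,8$) and $T_5$ (for $d=10$) tie with $T_2$, and for $d\ge11$ only $T_2$ survives, which matches fig.~7.
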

\begin{proof}
Induction on $d$. For $5\le d\le8$ the statement of the lemma is easily checked. Let $d\ge9$ and assume the lemma to be true for the trees of diameter at most $(d-1)$.
Let $T$ be a $(d+2,d)_{\txt{m.i.s.}}$--maximal tree. Let $v$ be the only vertex of $T$ that does not lye on the diametrical path. Let $u$ be the ending vertex of diametrical path,
that is furthest to $v$. Let $u'$ be the neighbor of $u$ and let $u''$ be a vertex at distance $2$ from $u$.
Using the induction hypothesis we can estimate $i_M(T)$:
$$
i_M(T)=i_M(T\setminus\{u,u'\})+i_M(T\setminus\{u,u',u''\})\le M(n-2,d-2)+M(n-3,d-3)=M(n,d),
$$
and the equality $i_M(T)=M(n,d)$ can hold only if $T\setminus\{u,u'\}$ and $T\setminus\{u,u',u''\}$ are isomorphic to some trees on fig. \textcolor{red}{7}.
But this can only be if $T$ is also isomorphic to a tree on fig. \textcolor{red}{7c}.
\end{proof}

\begin{lemma}\label{lemMaxTreesD5}
Every $(n,5)_{\txt{m.i.s.}}$--maximal tree is isomorphic to one of the trees on fig. \textcolor{red}{3a,3b}.
\end{lemma}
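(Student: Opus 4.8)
The plan is to first pin down the shape of any diameter-$5$ tree, then cut down to a one-parameter family of candidates using Lemma \ref{lemOneLeafOnly}, and finally compute $i_M$ exactly on that family and optimize under a parity constraint.

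First I would use that a tree of diameter $5$ has radius $3$ and its center on an edge $c_1c_2$: deleting this edge leaves subtrees $T_1\ni c_1$ and $T_2\ni c_2$, and since the eccentricity of each $c_j$ equals $3$, each $T_j$ has height exactly $2$ rooted at $c_j$ (greater height forces a path longer than $5$, smaller height on one side forces diameter $<5$). Hence every neighbour of $c_j$ other than $c_{3-j}$ is either a leaf or a vertex whose remaining neighbours are all leaves. Assuming $T$ is $(n,5)_{\txt{m.i.s.}}$--maximal, Lemma \ref{lemOneLeafOnly} forbids a vertex adjacent to two or more leaves; applied to each $c_j$ and to each depth-$1$ vertex, this shows $T$ is obtained from the edge $c_1c_2$ by attaching to $c_j$ exactly $s_j\ge1$ pendant paths of length two ($c_j\!-\!b\!-\!\ell$) together with $\varepsilon_j\in\{0,1\}$ pendant leaves. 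Thus every candidate is described by $(s_1,\varepsilon_1,s_2,\varepsilon_2)$ and has $n=2+\varepsilon_1+\varepsilon_2+2(s_1+s_2)$ vertices.

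Next I would count $i_M$ by splitting on the membership of $c_1,c_2$ in a m.\,i.\,s. $S$ (they cannot both lie in $S$). If $c_j\in S$ the whole side $T_j$ is forced and $c_{3-j}$ is dominated externally, leaving $2^{s_{3-j}}$ completions on the other side; the two cases $c_1\in S$ and $c_2\in S$ contribute $2^{s_2}+2^{s_1}$. If neither lies in $S$, each side must be dominated internally, which I would show contributes a factor $D_j:=2^{s_j}-(1-\varepsilon_j)$ — a direct leaf is forced into $S$ and dominates $c_j$, whereas with no direct leaf one must discard the single configuration that leaves $c_j$ undominated. Altogether
$$
i_M(T)=2^{s_1}+2^{s_2}+D_1D_2,\qquad D_j=2^{s_j}-(1-\varepsilon_j).
$$

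Finally I would optimize under the parity constraint from $n=2+\varepsilon_1+\varepsilon_2+2(s_1+s_2)$. For odd $n$ exactly one $\varepsilon_j$ equals $1$, the formula collapses to $2^{s_2}+2^{(n-3)/2}$, which is maximized uniquely at $s_1=1$, yielding the single tree of fig.\ \textcolor{red}{3a} with value $M(n,5)$. For even $n$ either $\varepsilon_1=\varepsilon_2=0$ or $\varepsilon_1=\varepsilon_2=1$; in the first case $i_M=2^{(n-2)/2}+1$ independently of the split $s_1+s_2$, while the best case $\varepsilon_1=\varepsilon_2=1$ gives the strictly smaller $2+3\cdot2^{(n-6)/2}$, so the maximizers are exactly the whole family of fig.\ \textcolor{red}{3b}, again with value $M(n,5)$. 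The main obstacle I anticipate is the bookkeeping in the domination count when $c_1,c_2\notin S$ (getting $D_j$ right, including the off-by-one when $\varepsilon_j=0$), together with verifying in the even case that every split $s_1+s_2=(n-2)/2$ gives the same value, so that the entire family — and nothing outside it — is extremal.
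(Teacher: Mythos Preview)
Your argument is correct and follows essentially the same route as the paper: use Lemma~\ref{lemOneLeafOnly} to reduce every diameter-$5$ candidate to the central-edge configuration with $s_j\ge1$ pendant $P_2$'s and $\varepsilon_j\in\{0,1\}$ leaves on each side, compute $i_M$ explicitly, and compare. The only difference is organizational---the paper disposes of the non-extremal shapes as two separate figure-based cases (your odd-$n$ configurations with $s_1\ge2$ are the paper's fig.~3c, and your even-$n$ case $\varepsilon_1=\varepsilon_2=1$ is fig.~3d), whereas your single formula $i_M=2^{s_1}+2^{s_2}+D_1D_2$ handles everything at once; the resulting values and extremal families agree with the paper's.
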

\begin{proof}
Let $T$ be an arbitrary $(n,5)_{\txt{m.i.s.}}$--maximal tree. If $T$ is as shown on fig. \textcolor{red}{3a,3b}, then it is easy to check that $i_M(T)=M(n,5)$.
Assume that $T$ is not isomorphic to any of the trees on fig. \textcolor{red}{3a,3b}. Then lemma \ref{lemOneLeafOnly} implies that it it is sufficient to consider the following two cases:
\begin{enumerate}
\item $T$ is as shown on fig. \textcolor{red}{3c}, where $n\ge9$ and $p\ge2$. In this case $$i_M(T)=2^{(n-5)/2}(2+2^{1-p})<3\cdot2^{(n-5)/2}=M(n,5).$$
\item $T$ is as shown on fig. \textcolor{red}{3d}, where $n\ge8$. In this case $$i_M(T)=2^{(n-4)/2}+2^p+2^{(n-2p-4)/2}\le2+3\cdot2^{(n-6)/2}<1+4\cdot2^{(n-6)/2}=M(n,5).$$
\end{enumerate}
Thus, in each of two cases we get a contradiction with maximality of $T$, which completes the proof.
\end{proof}

Trivially, every tree of diameter $1$ or $2$ contains exactly $2$ m.\,i.\,s., and every tree of diameter $3$ contains $3$ m.\,i.\,s.
For $d\ge4$ the complete characterization of $(n,d)_{\txt{m.i.s.}}$--maximal trees is provided by the following theorem.

\begin{theorem}
For all $4\le d\le n-1$ any $(n,d)_{\txt{m.i.s.}}$--maximal tree $T$ has $i_M(T)=M(n,t)$ and is isomorphic to one of the trees listed in the following table:
{\textup{
\begin{center}
\begin{tabular}{|c|l|l|}\hline
$d$&$(n-d)$&extremal trees\\ \hline
$4$&$2k+1$ ($k\ge1$)&fig. \textcolor{red}{2a} \\ \hline
$4$&$2k$ ($k\ge1$)&fig. \textcolor{red}{2b} \\ \hline
$5$&$2k+1$ ($k\ge1$)&fig. \textcolor{red}{3b} ($1\le p\le\frac{n-4}{2}$) \\ \hline
$5$&$2k$ ($k\ge1$)&fig. \textcolor{red}{3a} \\ \hline
$6$&$2k+1$ ($k\ge1$)&fig. \textcolor{red}{1a} \\ \hline
$6$&$2$&fig. \textcolor{red}{7a}, \textcolor{red}{7c} \\ \hline
$6$&$2k$ ($k\ge2$)&fig. \textcolor{red}{4d} ($1\le p\le\frac{n-6}{2}$) \\ \hline
$7$&$2k+1$ ($k\ge1$)&fig. \textcolor{red}{5b} ($1\le p\le\frac{n-6}{2}$) \\ \hline
$7$&$2k$ ($k\ge1$)&fig. \textcolor{red}{5a} \\ \hline
$8$&$2k+1$ ($k\ge1$)&fig. \textcolor{red}{1a} \\ \hline
$8$&$2$&fig. \textcolor{red}{7b}, \textcolor{red}{7c} \\ \hline
$8$&$2k$ ($k\ge2$)&fig. \textcolor{red}{1b} \\ \hline
$\ge9$&$2k+1$ ($k\ge1$)&fig. \textcolor{red}{1a} \\ \hline
$\ge9$&$2$&fig. \textcolor{red}{7c} \\ \hline
$\ge9$&$2k$ ($k\ge2$)&fig. \textcolor{red}{1b} \\ \hline
\end{tabular}
\end{center}
}}
\end{theorem}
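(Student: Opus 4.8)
The plan is to establish the bound $i_M(T)\le M(n,d)$ together with the list of equality cases by induction on $d$ with the surplus $n-d$ held fixed, paralleling lemma~\ref{lemMaxTreesDplus2} (which is the instance $n-d=2$). Take a diametrical path of $T$ with an endpoint $u$, write $u'$ for its neighbour and $u''$ for the next vertex of the path. Since $T$ is maximal, lemma~\ref{lemOneLeafOnly} forbids a second leaf at $u'$, and any non-leaf neighbour $w$ of $u'$ would give a path from the far endpoint through $u',w$ of length $(d-1)+2>d$; hence $\partial u'=\{u,u''\}$. Classifying the maximal independent sets of $T$ by whether they contain $u$ (forcing $u'$ out) or omit $u$ (forcing $u'$ in and $u''$ out) yields
$$ i_M(T)=i_M(T\setminus\{u,u'\})+i_M(T\setminus\{u,u',u''\}). $$

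Write $T_1=T\setminus\{u,u'\}$ and $T_2=T\setminus\{u,u',u''\}$. When the removal shortens every longest path, $\diam T_1=d-2$ and $\diam T_2=d-3$, so $T_1,T_2$ remain in the surplus class $n-d$; the induction hypothesis then gives $i_M(T_1)\le M(n-2,d-2)$ and $i_M(T_2)\le M(n-3,d-3)$, and the identity $\psi_{d-1}=\psi_{d-3}+\psi_{d-4}$ gives $M(n-2,d-2)+M(n-3,d-3)=M(n,d)$ in both parities, closing the bound. The base cases are the small diameters: $d\le3$ is trivial, $d=5$ is lemma~\ref{lemMaxTreesD5}, $n-d=2$ is lemma~\ref{lemMaxTreesDplus2}, and the diameters $d\in\{4,6,7\}$, where proposition~\ref{stPreArgmax} records the irregular behaviour of $M$ (the $+1$ at $d\in\{4,7\}$ and the coincidence $M(n,5)=M(n,7)$), are checked directly.

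The main obstacle is that the removal need not shorten the diameter cleanly. A second long path can leave $\diam T_1$ equal to $d-1$ or $d$, and branches at $u''$ make $T_2$ a forest whose $i_M$ is the product over its components. This is most dangerous for even $n-d$: by lemma~\ref{lemArgmax}(2) the largest admissible value $\max_{\delta\ge d-2}M(n-2,\delta)$ is attained at $\delta=d-1$, not at $d-2$, so a crude estimate overshoots $M(n,d)$ and one must genuinely prove $\diam T_1=d-2$. I would resolve this by choosing $u$ on a diametrical path whose deletion is forced to shorten all longest paths of a near-extremal tree, and by handling the forest case through the multiplicativity of $i_M$ together with lemma~\ref{lemArgmax}(3), which caps the contribution of each detached branch; lemma~\ref{lemArgmax} is exactly the bookkeeping device that converts every diameter-shift ambiguity into a valid inequality.

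The equality analysis runs the recurrence backwards: $i_M(T)=M(n,d)$ forces both $T_1$ and $T_2$ to be extremal for their own parameters, hence by the induction hypothesis isomorphic to tabulated trees on $n-2$ and $n-3$ vertices. Determining which pairs of these can arise simultaneously as $T\setminus\{u,u'\}$ and $T\setminus\{u,u',u''\}$ from a single diameter-$d$ tree reconstructs $T$ and reproduces exactly the figures of the table, including the rows (such as $d=8$, $n-d=2$) that admit more than one extremal tree. The exceptional diameters $d\in\{4,5,6,7\}$, at which proposition~\ref{stPreArgmax} shows the generic monotonicity of $M$ breaks, are precisely the rows whose extremal families depart from the generic pattern, and these finitely many rows I would verify one at a time.
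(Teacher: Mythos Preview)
Your recurrence $i_M(T)=i_M(T_1)+i_M(T_2)$ is correct, as is the identity $M(n-2,d-2)+M(n-3,d-3)=M(n,d)$ for generic $d$, and the observation that $\deg u'=2$ is forced. But the paragraph beginning ``The main obstacle\ldots'' is where the proof actually lives, and you have not carried it out. Your induction is on $d$ with the surplus $n-d$ held fixed; the moment $\diam T_1\in\{d-1,d\}$ or $T_2$ is a forest, the pieces leave that surplus class and your hypothesis says nothing about them. The escape you propose --- pick an endpoint whose removal is ``forced to shorten all longest paths of a near-extremal tree'' --- is circular: knowing that extremal trees have a clean end is essentially the content of the theorem, and for a general $T$ there need not be any such endpoint (attach a pendant $P_2$ at the vertex two steps in from \emph{each} end of $P$ and neither end is clean). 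Likewise the forest bound via lemma~\ref{lemArgmax}(3) is far too weak: $M(n_i,4)$ grows like $2^{n_i/2}$, so the product over detached branches does not stay below $M(n,d)$ without further structural input.

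The paper's proof takes a different route that sidesteps both problems. The induction hypothesis covers all $(n'',d)$ with $n''<n$ and all $(n',d')$ with $d'<d$, so one is never trapped in a fixed surplus class. Before any recurrence is applied, a separate argument (your cases are the paper's figures~9a,\,9b) shows that in a maximal tree every vertex lies within distance~$2$ of $P$; combined with lemma~\ref{lemOneLeafOnly} this forces the branches at the third vertex $u$ (two steps in from the end) to be single leaves or pendant $P_2$'s. The heart of the proof is then a case analysis on what hangs at $u$, $u'$, $u''$ (distances $2,3,4$ from the end, with the end chosen so that this degree sequence is lexicographically largest). Each case uses its own recurrence --- for instance, when $u$ carries $t\ge2$ pendant $P_2$'s one peels off a single $P_2$, landing in $(n-2,d)$ rather than $(n-2,d-2)$ --- and only in the subcase $\deg u=\deg u'=2$ with $u''$ carrying no pendant $P_2$ does the paper invoke your identity $M(n-2,d-2)+M(n-3,d-3)=M(n,d)$. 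That subcase is perhaps a tenth of the argument; the rest is exactly the casework you deferred.
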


\begin{proof}
The statement of the theorem for $d=4$ follows directly from lemma \ref{lemOneLeafOnly}, and for $d=5$ is corollary of lemma \ref{lemMaxTreesD5}. Moreover, the statement is trivial for
$n=d+1$ and follows from lemma \ref{lemMaxTreesDplus2} for $n=d+2$.
\par Let $d\ge 6$, $n\ge d+3$, and assume that the theorem holds for all pairs $(n'',d)$ such that $n''<n$, and all pairs $(n',d')$ such that $d'\le d-1$.
Let us prove that then the theorem also holds for $(n,d)$. Let $T$ be an arbitrary $(n,d)_{\txt{m.i.s.}}$--maximal tree, and let $P$ be some diametrical path of $T$.
First we will show that for every vertex $v$, which is not contained in $P$, the minimal distance from $v$ to vertices of $P$ does not exceed $2$. Let us assume the contrary
and show that under this assumption the inequality $M(n,d)-i_M(T)>0$ holds, contradicting the choice of $T$. Lemma \ref{lemOneLeafOnly} implies that only these cases are possible:
\begin{enumerate}
\item $T$ is as shown on fig. \textcolor{red}{9a}, where $\diam(T')=d$ and $1\le t\le \frac{n-d-2}{2}$. In this case $M(n,d)-i_M(T)\ge D_1(n,d)$, where
$$D_1(n,d)=M(n,d)-M(n-2,d)-M(n-2t-1,d)\cdot2^{t-1}.$$
Consider the following subcases:
    \begin{enumerate}
    \item $n=d+4$. Then $t=1$ and $D_1(n,d)\ge4\psi_{d-1}-\psi_{d-2}-\psi_d-\psi_{d+1}>0$.
    \item $n=d+2k$ with $k\ge3$. Then
    $$
    \begin{array}{rl}
    D_1(n,d)&=2^{(n-d)/2}\psi_{d-1}-2^{(n-d-2)/2}(\psi_{d-1}+\psi_{d-2})-2^{t-1}(\psi_{d-1}-\psi_{d-2})\ge\\
    &\ge 2^{(n-d)/2}\psi_{d-1}-2^{(n-d-2)/2}(\psi_{d-1}+\psi_{d-2})-2^{(n-d-4)/2}(\psi_{d-1}-\psi_{d-2})=\\
    &=2^{(n-d-4)/2}(\psi_{d-1}-\psi_{d-2})>0.
    \end{array}
    $$
    \item $n=d+2k+1$ with $k\ge2$, and $t=\frac{n-d-3}{2}$. Then $D_1(n,d)=2^{(n-d-5)/2}(3\psi_{d-2}-\psi_d)>0$.
    \item $n=d+2k+1$ with $k\ge3$, and $t\le\frac{n-d-5}{2}$. Then $D_1(n,d)\ge2^{(n-d-7)/2}(8\psi_{d-2}-4\psi_{d-1}-1)>0$.
    \end{enumerate}
\item $T$ is as shown on fig. \textcolor{red}{9b}, where $\diam(T')=d$ and $1\le t\le \frac{n-d-3}{2}$. In this case we have $M(n,d)-i_M(T)\ge D_2(n,d)$, where
$$D_2(n,d)=M(n,d)-M(n-2,d)-M(n-2t-2,d)\cdot2^{t-1}.$$
Consider the following subcases:
    \begin{enumerate}
    \item $n=d+2k+1$ with $k\ge2$. Then
    $$
    \begin{array}{rl}
    D_2(n,d)&=2^{(n-d-3)/2}\psi_{d-2}-2^{t-1}(\psi_{d-1}-\psi_{d-2})\ge\\
    &\ge 2^{(n-d-3)/2}\psi_{d-2}-2^{(n-d-5)/2}(\psi_{d-1}-\psi_{d-2})=\\
    &=2^{(n-d-5)/2}(3\psi_{d-2}-\psi_{d-1})>0.
    \end{array}
    $$
    \item $n=d+2k$ with $k\ge3$, and $t=\frac{n-d-4}{2}$. Then $D_2(n,d)=2^{(n-d-6)/2}(4\psi_{d-1}-\psi_d-\psi_{d-2})>0$.
    \item $n=d+2k$ with $k\ge3$, and $t\le\frac{n-d-6}{2}$. Then $D_2(n,d)>2^{(n-d-4)/2}(\psi_{d-1}-1)>0$.
    \end{enumerate}
\end{enumerate}
\par In any case, the assumption of existence of a vertex distanced from $P$ by $2$ or more, contradicts the $(n,d)_{\txt{m.i.s.}}$--maximality of $T$. From this and from
lemma \ref{lemOneLeafOnly} it follows that in the remaining part of the theorem we may assume that every vertex in $T$ is at most $2$ edges away from the diametrical path, and
every vertex of $T$ is neighboring at most one leaf.
\par We shall consider the cases, when $T$ has diameter $6$ or $7$, separately from the general case.
\begin{enumerate}
\item $\diam(T)=6$. Assume that $T$ is not isomorphic to any of the trees on fig. \textcolor{red}{1a}, \textcolor{red}{1b} or \textcolor{red}{4d}. Then the following cases are possible:
    \begin{enumerate}
    \item $T$ is as shown on fig. \textcolor{red}{9b}, where $1\le t\le \frac{n-6}{2}$ and $\diam(T')\ge3$. If $T'$ has only four vertices, then $i_M(T)=2+3\cdot2^{(n-6)/2}<M(n,6)$.
    Let $T'$ have at least $5$ vertices. Then lemma \ref{lemArgmax} and induction hypothesis imply
    $$M(n,6)-i_M(T)\ge M(n,6)-\max_{d\in\{5,6\}} M(n-2,d)-2^{t-1}M(n-2t-2,4).$$
    If $n$ is even, then $\max_{d\in\{5,6\}} M(n-2,d)=M(n-2,5)$ and
    $$
    M(n,6)-i_M(T)\ge M(n,6)-M(n-2,5)-2^{t-1}M(n-2t-2,4)\ge2^{(n-8)/2}-1>0.
    $$
    If $n$ is odd, then $\max_{d\in\{5,6\}} M(n-2,d)=M(n-2,6)$ and
    $$
    M(n,6)-i_M(T)\ge M(n,6)-M(n-2,6)-2^{t-1}M(n-2t-2,4)=2^{(n-7)/2}>0.
    $$
    \item $T$ is either isomorphic to one of the trees on fig. \textcolor{red}{4a}, \textcolor{red}{4b}, or isomorphic to the tree on fig. \textcolor{red}{4c} with $2\le p\le\frac{n-7}{2}$.
    The following table shows that in all these cases we have $i_M(T)<M(n,6)$:
    \begin{center}
    \begin{tabular}{|c|c|c|}\hline
    fig. & $i_M(T)$ & lower bound for $(M(n,6)-i_M(T))$ \\ \hline
    \textcolor{red}{4a}&$2^{(n-3)/2}+2^p+2^{(n-3-2p-2q)/2}-1$&$2^{(n-7)/2}$\\ \hline
    \textcolor{red}{4b}&$2^{(n-4)/2}+2^{(n-4-2q)/2}$&$2^{(n-6)/2}$\\ \hline
    \textcolor{red}{4c}&$2^{(n-3)/2}+2^p+2^{(n-3-2p)/2}-1$&$2^{(n-7)/2}-2$\\ \hline
    \end{tabular}
    \end{center}
    \end{enumerate}
\item $\diam(T)=7$. The following subcases are possible:
    \begin{enumerate}
    \item\label{itm7_Pre2} $T$ is as shown on fig. \textcolor{red}{9b}, where $1\le t\le \frac{n-7}{2}$ and $\diam(T')\ge4$. Then lemma \ref{lemArgmax} and induction hypothesis imply
    $$M(n,7)-i_M(T)\ge M(n,7)-\max_{d\in\{6,7\}} M(n-2,d)-2^{t-1}M(n-2t-2,4).$$
    If $n$ is even, then $\max_{d\in\{6,7\}} M(n-2,d)=M(n-2,7)$ and
    $$
    M(n,7)-i_M(T)\ge M(n,7)-M(n-2,7)-2^{t-1}M(n-2t-2,4)\ge3\cdot2^{(n-10)/2}>0.
    $$
    If $n$ is odd, then $\max_{d\in\{6,7\}} M(n-2,d)=M(n-2,6)$ and
    $$
    M(n,7)-i_M(T)\ge M(n,7)-M(n-2,6)-2^{t-1}M(n-2t-2,4)=0,
    $$
    and lemma \ref{lemArgmax} and induction hypothesis imply that the equality $$M(n,7)-i_M(T)=M(n,7)-M(n-2,6)-2^{t-1}M(n-2t-2,4)$$ can only hold if $t=1$ and $T'$
    is as shown on fig. \textcolor{red}{2a}, which is possible only if $T$ is as shown on fig. \textcolor{red}{5a}.
    \item $T$ is as shown on fig. \textcolor{red}{9a}, where $1\le t\le \frac{n-6}{2}$. If $n$ is even, then lemma \ref{lemArgmax} and induction hypothesis imply
    $$
    i_M(T)\le M(n-2,5)+2^{t-1}M(n-2t-1,4)=M(n,7),
    $$
    and the equality $i_M(T)=M(n,7)$ can occur only if $T'$ is as shown on fig. \textcolor{red}{2a}, which can only happen if $T$ is isomorphic to the tree on fig. \textcolor{red}{5b}.
    \par Suppose now that $n$ is odd, and $T$ is not isomorphic to the tree on fig. \textcolor{red}{5a}, and cannot be considered in the scope of the case \ref{itm7_Pre2}.
    Then $T$ is as shown on fig. \textcolor{red}{5c} for some $q,r\ge0$, and we have
    $$
    i_M(T)=2^{(n-5)/2}+2^{p+q}+2^{(n-2q-5)/2}\le 5\cdot2^{(n-7)/2}<M(n,7).
    $$
    \end{enumerate}
\end{enumerate}
\par All cases when $d=\diam(T)\le7$ were considered above, and in the remaining part of the proof we will assume $d\ge8$.
Fix some diametrical path $P$ in $T$. Let $w,\,w',\,u,\,u',\,u''$ be the successive vertices of $P$, where $w$ is an end of $P$.
We will assume that if $\tilde{w},\,\tilde{w}',\,\tilde{u},\,\tilde{u}',\,\tilde{u}''$ are successive vertices of $P$ such that $\tilde{w}$ is the ending vetrex of $P$ opposite to $w$,
then the tuple of degrees $(\deg w,\,\deg w',\,\deg u,\,\deg u',\,\deg u'')$ is lexicographically no less than the
tuple $(\deg \tilde{w},\,\deg \tilde{w}',\,\deg \tilde{u},\,\deg \tilde{u}',\,\deg \tilde{u}'')$. We will split the proof into the consideration of the following cases:
\begin{enumerate}
\item\label{itmUt2L0} Vertex $u$ is adjacent to $t$ paths on two vertices, where $t\ge 2$, and $u$ is adjacent to no leaves. In this case $T$ is as shown on fig. \textcolor{red}{9a},
where $\diam(T')\ge d-3$. We have
\begin{equation}\label{eqRecurT1}
i_M(T)\le M(n-2,d)+2^{t-1}\cdot\max_{d'\ge d-3}M(n-2t-1,d').
\end{equation}
    \begin{enumerate}
    \item If $2\nmid(n-d)$ then lemma \ref{lemArgmax} implies $\max_{d'\ge d-3}M(n-2t-1,d')\le M(n-2t-1,d-3)$. Then \eqref{eqRecurT1} implies
    $$
    \begin{array}{rl}
    M(n,d)-i_M(T)&\ge M(n,d)-M(n-2,d)-2^{t-1}\cdot M(n-2t-1,d-3)=\\
    &=2^{(n-d-1)/2}\psi_{d-2}-2^{(n-d+1)}\psi_{d-5}-2^{t-1}(\psi_{d-4}-\psi_{d-5})\ge\\
    &\ge 2^{(n-d-1)/2}\psi_{d-2}-2^{(n-d+1)/2}\psi_{d-5}-2^{(n-d-1)/2}(\psi_{d-4}-\psi_{d-5})=0,
    \end{array}
    $$
    and equality $i_M(T)=M(n,d)$ can only hold if $t=\frac{n-d+1}{2}$, that is if $T$ is as shown of fig. \textcolor{red}{1a}.
    \item If $2\mid(n-d)$ then lemma \ref{lemArgmax} implies $\max_{d'\ge d-3}M(n-2t-1,d')\le M(n-2t-1,d-2)$. The following two subcases are possible.
        \begin{enumerate}
        \item $n\ge d+6$. Then
        $$
        \begin{array}{rl}
        M(n,d)-i_M(T)&\ge M(n,d)-M(n-2,d)-2^{t-1}\cdot M(n-2t-1,d-2)=\\
        &=(2^{(n-d-2)/2}-2^{t-1})(\psi_{d-3}-\psi_{d-4})\ge 0,
        \end{array}
        $$
        and the equality $i_M(T)=M(n,d)$ can hold only if $t=\frac{n-d}{2}$ and $\diam(T')=d-2$. But then $T$ is as shown on fig. \textcolor{red}{1b}.
        \item $n=d+4$. Then $T$ is isomorphic to one of the trees on fig. \textcolor{red}{8a-8c}. If $T$ is as shown on fig. \textcolor{red}{8a}, then
        $$M(n,d)-i_M(T)=4\psi_{d-1}-2(\psi_d+\psi_{d-5})-\psi_{d-3}>0.$$
        If $T$ is as shown on fig. \textcolor{red}{8b}, then $M(n,d)-i_M(T)=2\psi_{d-1}-4\psi_{d-4}>0$. If $T$ is as shown on fig. \textcolor{red}{8c}, then for $8\le d\le 10$
        the statement of the theorem is easily checked, and for $d\ge11$ the induction hypothesis implies
        $$M(n,d)-i_M(T)\ge M(n,d)-M(n-2,d-2)-M(n-3,d-3)=0,$$
        with $i_M(T)=M(n,d)$ iff $T$ is as shown on fig. \textcolor{red}{1b}.
        \end{enumerate}
    \end{enumerate}
\item Vertex $u$ is adjacent to $t,\,t\ge2$, paths on two vertices and is adjacent to exactly one leaf. In this case $T$ is as shown on fig. \textcolor{red}{9b}, where $\diam(T')\ge d-3$.
We have
\begin{equation}\label{eqRecurT2}
i_M(T)\le M(n-2,d)+2^{t-1}\cdot\max_{d'\ge d-3}M(n-2t-2,d').
\end{equation}
    \begin{enumerate}
    \item If $2\nmid(n-d)$, then lemma \ref{lemArgmax} implies $\max_{d'\ge d-3}M(n-2t-1,d')\le M(n-2t-2,d-2)$. Then from \eqref{eqRecurT2} it follows that
    $$
    \begin{array}{rl}
    M(n,d)-i_M(T)&\ge M(n,d)-M(n-2,d)-2^{t-1}\cdot M(n-2t-2,d-2)=\\
    &=2^{(n-d-1)/2}(\psi_{d-2}-\psi_{d-4})-2^{t-1}(\psi_{d-3}-\psi_{d-4})\ge\\
    &\ge 2^{(n-d-1)/2}(\psi_{d-2}-\psi_{d-4})-2^{(n-d-3)/2}(\psi_{d-3}-\psi_{d-4})=\\
    &=2^{(n-d-3)/2}(2\psi_{d-2}-\psi_{d-1})>0.
    \end{array}
    $$
    \item If $2\mid(n-d)$, then lemma \ref{lemArgmax} implies $\max_{d'\ge d-3}M(n-2t-2,d')\le M(n-2t-2,d-3)$. Then \eqref{eqRecurT2} implies
    $$
    \begin{array}{rl}
    M(n,d)-i_M(T)&\ge M(n,d)-M(n-2,d)-2^{t-1}\cdot M(n-2t-2,d-3)=\\
    &=2^{(n-d-2)/2}\psi_{d-1}-2^{t-1}(\psi_{d-4}-\psi_{d-5})-2^{(n-d)/2}\psi_{d-5}\ge\\
    &\ge2^{(n-d-2)/2}\psi_{d-1}-2^{(n-d-2)/2}(\psi_{d-4}-\psi_{d-5})-2^{(n-d)/2}\psi_{d-5}=0,\\
    \end{array}
    $$
    and for equality $i_M(T)=M(n,d)$ to hold, it is necessary that $t=\frac{n-d}{2}$. But for $t=\frac{n-d}{2}$ we have
    $$M(n,d)-i_M(T)=2^{(n-d)/2}(\psi_{d-1}-\psi_{d-2})-\psi_{d-3}\ge 4(\psi_{d-1}-\psi_{d-2})-\psi_{d-3}>0.$$
    \end{enumerate}
\item\label{itmUt1L1} $u$ is adjacent to one path on two vertices and one leaf. In this case $T$ is as shown on fig. \textcolor{red}{9c}, where $\diam(T')\ge d-3$. Then,
like in the previous case, we apply the induction hypothesis and lemma \ref{lemArgmax}:
    \begin{enumerate}
    \item If $2\mid(n-d)$, then
    $$
    \begin{array}{rl}
    M(n,d)-i_M(T)&\ge M(n,d)-M(n-2,d-1)-M(n-4,d-3)=\\
    &=(2^{(n-d)/2}-1)(\psi_{d-4}-\psi_{d-5})-\psi_{d-2}+\psi_{d-3}\ge \\
    &\ge 3(\psi_{d-4}-\psi_{d-5})-\psi_{d-2}+\psi_{d-3}>0.
    \end{array}
    $$
    \item If $2\nmid(n-d)$, then
    $$
    \begin{array}{rl}
    M(n,d)-i_M(T)&\ge M(n,d)-M(n-2,d-1)-M(n-4,d-2)\ge\\
    &\ge (2^{(n-d-1)/2}-1)(\psi_{d-2}-\psi_{d-4})+\psi_{d-1}-\psi_{d-3}-1>0.
    \end{array}
    $$
    \end{enumerate}
\item It suffices to consider the case when $u$ is adjacent to one path on two vertices and has no neighboring leaves (that is $\deg u=2$). Consider $u'$, the neighbor of $u$ which
is at distance $3$ from the end of diametrical path $P$. The following four subcases are possible:
    \begin{enumerate}
    \item $u'$ is adjacent to some path on two vertices. Then $M(n,d)-i_M(T)\ge D_3(n,d)$, where $$D_3(n,d)=M(n,d)-M(n-2,d-1)-M(n-3,d-1).$$
        \begin{enumerate}
        \item If $n=d+3$, then $D_3(n,d)=3\psi_{d-2}-\psi_{d-3}-\psi_d>0.$
        \item If $n=d+4$, then for $d\ge12$ we have $$D_3(n,d)=2\psi_{d-4}-\psi_{d-3}-\psi_{d-5}>0,$$
        and for $8\le d\le11$ the inequality $i_M(T)<M(n,d)$ can be easily checked by hand.
        \item If $2\nmid(n-d)$ and $n\ge d+5$, then
        $$D_3(n,d)\ge(2^{(n-d-1)/2}-1)(\psi_{d-2}-\psi_{d-3})+\psi_{d-1}-\psi_{d-2}-1>0.$$
        \item If $2\mid(n-d)$ and $n\ge d+6$, then
        $$
        \begin{array}{rl}
        D_3(n,d)&=2^{(n-d-2)/2}(2\psi_{d-4}-\psi_{d-2})-\psi_{d-2}+\psi_{d-3}-1\ge\\
        &\ge4(2\psi_{d-4}-\psi_{d-2})-\psi_{d-2}+\psi_{d-3}-1>0.
        \end{array}
        $$
        \end{enumerate}
    \item\label{itmU2UL1} $u'$ is adjacent to one leaf and no paths on two vertices. In this case proposition \ref{stAllLeavesSame} and the induction hypothesis imply $i_M(T)\le 2M(n-3,d-2)<M(n,d)$.
    \item The degree of $u'$ is $2$. Then we consider the vertex $u''$ neighboring $u'$, which is at distance $4$ of the end of $P$. Firstly we consider the case,
    when $u''$ is adjacent to no paths on two vertices. If $d\notin\{9,10\}$, or $2\nmid(n-d)$, then the theorem follows from induction hypothesis and equality $M(n-2,d-2)+M(n-3,d-3)=M(n,d)$.
    The cases $d=9,\,2\nmid n$ and $d=10,\,2\mid n$ have to be considered separately due to ``non-standard'' behavior of $M(n,d)$ for $d=7,\,2\nmid n$:
        \begin{enumerate}
        \item $d=9$ and $2\nmid n$. If $T\setminus\{w,w'\}$ is not isomorphic to a tree on fig. \textcolor{red}{5a} and at the same time $T\setminus\{w,w',u\}$ is not isomorphic
        to the tree on fig. \textcolor{red}{4d}, then induction hypothesis implies $i_M(T)\le M(n-2,7)+M(n-3,6)-2<M(n,7)$.
        If $T\setminus\{w,w'\}$ was as shown on fig. \textcolor{red}{5a}, or $T\setminus\{w,w',u\}$ was as shown on fig. \textcolor{red}{4d},
        then $T$ would have been in the scope of the previously considered cases \ref{itmUt2L0}, \ref{itmUt1L1}, \ref{itmU2UL1}.
        \item $d=10$ and $2\mid n$. If $T\setminus\{w,w'\}$ is not isomorphic to a tree on fig. \textcolor{red}{1b} and at the same time $T\setminus\{w,w',u\}$ is not isomorphic
        to the tree on fig. \textcolor{red}{5a}, then induction hypothesis implies $i_M(T)\le M(n-2,8)+M(n-3,7)-2<M(n,7)$. If $T\setminus\{w,w'\}$ is isomorphic to the tree
        on fig. \textcolor{red}{1b}, then $T$ is isomorphic to the tree on fig. \textcolor{red}{5a}. If $T\setminus\{w,w',u\}$ was as shown on fig. \textcolor{red}{5a},
        then $T$ would have been in the scope of the previously considered cases \ref{itmUt2L0}, \ref{itmUt1L1}, \ref{itmU2UL1}.
        \end{enumerate}
    \item The only case which suffices to be considered is when $\deg u'=2$, and $u''$ is adjacent to at least one path on two vertices. The following four subcases are possible:
        \begin{enumerate}
        \item $\diam(T)=8$. Then $n\ge11$ and $T$ is isomorphic to one of the trees on fig. \textcolor{red}{6a} or \textcolor{red}{6b}.
        The following table shows that in these both cases we would have $i_M(T)<M(n,8)$.
        \begin{center}
        \begin{tabular}{|c|c|c|}\hline
        fig. & $i_M(T)$ & $M(n,8)-i_M(T)$ \\ \hline
        \textcolor{red}{6a}&$9\cdot2^{(n-9)/2}+3$&$2^{(n-9)/2}-1$\\ \hline
        \textcolor{red}{6b}&$9\cdot2^{(n-10)/2}+4$&$5\cdot2^{(n-10)/2}-4$\\ \hline
        \end{tabular}
        \end{center}
        \item $\diam(T)=9$. Then $n\ge14$ and $T$ is isomorphic to one of the trees on fig. \textcolor{red}{6c--6e}.
        The following table shows that in all these cases we would have $i_M(T)<M(n,9)$.
        \begin{center}
        \begin{tabular}{|c|c|c|}\hline
        fig. & $i_M(T)$ & lower bound for $(M(n,9)-i_M(T))$ \\ \hline
        \textcolor{red}{6c}&$9\cdot2^{(n-10)/2}+3(2^p+2^{(n-10-2p)/2})+1$&$7\cdot2^{(n-12)/2}-5$\\ \hline
        \textcolor{red}{6d}&$9\cdot2^{(n-11)/2}+3\cdot2^p+6\cdot2^{(n-11-2p)/2}$&$3\cdot2^{(n-9)/2}-6$\\ \hline
        \textcolor{red}{6e}&$9\cdot2^{(n-12)/2}+6(2^p+2^{(n-12-2p)/2})$&$2^{(n-4)/2}-10$\\ \hline
        \end{tabular}
        \end{center}
        \item $d\ge10$ and $T$ is as shown on fig. \textcolor{red}{9d}. If $2\mid(n-d)$ and $n\ge d+6$, then
        $$
        \begin{array}{rl}
        M(n,d)-i_M(T)&\ge M(n,d)-M(n-2,d)-3\cdot2^{t-1}\cdot M(n-2t-5,d-4)=\\
        &=2^{(n-d-2)/2}(\psi_{d-1}-3\psi_{d-6})-3\cdot2^{t-1}(\psi_{d-5}-\psi_{d-6})\ge\\
        &\ge2^{(n-d-2)/2}(\psi_{d-1}-3\psi_{d-6})-3\cdot2^{(n-d-4)/2}(\psi_{d-5}-\psi_{d-6})=\\
        &=2^{(n-d-4)/2}(2\psi_{d-4}-\psi_{d-3})>0.
        \end{array}
        $$
        If $n=d+4$, then $t=1$ and $$M(n,d)-i_M(T)\ge M(d+4,d)-M(d+2,d)-3M(d-3,d-4)=4\psi_{d-1}-2\psi_d-2\psi_{d-3}>0.$$
        If $2\nmid(n-d)$, then
        $$
        \begin{array}{rl}
        M(n,d)-i_M(T)&\ge M(n,d)-M(n-2,d)-3\cdot2^{t-1}\cdot M(n-2t-5,d-5)=\\
        &=2^{(n-d-1)/2}(\psi_{d-2}-3\psi_{d-7})-3\cdot2^{t-1}(\psi_{d-6}-\psi_{d-7})\ge\\
        &\ge2^{(n-d-1)/2}(\psi_{d-2}-3\psi_{d-7})-3\cdot2^{(n-d-3)/2}(\psi_{d-6}-\psi_{d-7})=\\
        &=2^{(n-d-3)/2}(2\psi_{d-2}-3\psi_{d-4})>0.
        \end{array}
        $$
        \item $d\ge10$ and $T$ is as shown on fig. \textcolor{red}{9e}. Then if $2\mid(n-d)$ and $n\ge d+6$, we have
        $$
        \begin{array}{rl}
        M(n,d)-i_M(T)&\ge M(n,d)-M(n-2,d)-3\cdot2^{t-1}\cdot M(n-2t-6,d-5)=\\
        &=2^{(n-d-2)/2}(\psi_{d-1}-3\psi_{d-7})-3\cdot2^{t-1}(\psi_{d-6}-\psi_{d-7})\ge\\
        &\ge2^{(n-d-2)/2}(\psi_{d-1}-3\psi_{d-7})-3\cdot2^{(n-d-4)/2}(\psi_{d-6}-\psi_{d-7})=\\
        &=2^{(n-d-4)/2}(2\psi_{d-3}-\psi_{d-4})>0.
        \end{array}
        $$
        If $n=d+4$, then $t=1$ and $$M(n,d)-i_M(T)\ge M(d+4,d)-M(d+2,d)-3M(d-4,d-5)=\psi_{d-1}+2\psi_{d-3}-2\psi_{d-2}>0.$$
        If $2\nmid(n-d)$, then
        $$
        \begin{array}{rl}
        M(n,d)-i_M(T)&\ge M(n,d)-M(n-2,d)-3\cdot2^{t-1}\cdot M(n-2t-6,d-4)=\\
        &=2^{(n-d-3)/2}(2\psi_{d-2}-3\psi_{d-6})-3\cdot2^{t-1}(\psi_{d-5}-\psi_{d-6})\ge\\
        &\ge2^{(n-d-3)/2}(2\psi_{d-2}-3\psi_{d-6})-3\cdot2^{(n-d-5)/2}(\psi_{d-5}-\psi_{d-6})=\\
        &=2^{(n-d-5)/2}(4\psi_{d-2}-3\psi_{d-3})>0.
        \end{array}
        $$
        \end{enumerate}
    \end{enumerate}
\end{enumerate}
\end{proof}

\section*{List of figures}
\begin{tabular}{cc}
\includegraphics{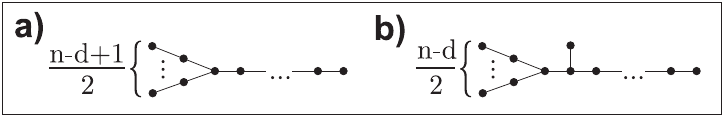}&\includegraphics{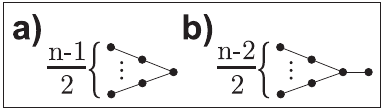}\\
Fig. 1&Fig. 2\\
\vphantom{A}& \\
\includegraphics{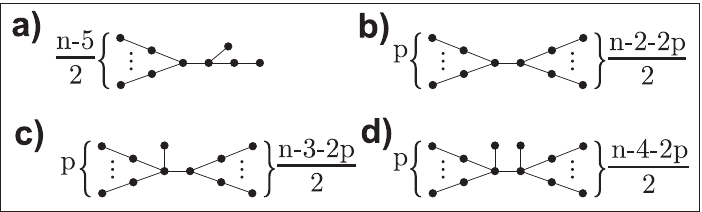}&\includegraphics{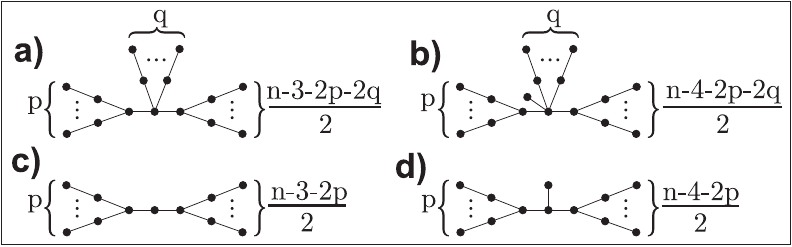}\\
Fig. 3&Fig. 4\\
\vphantom{A}& \\
\includegraphics{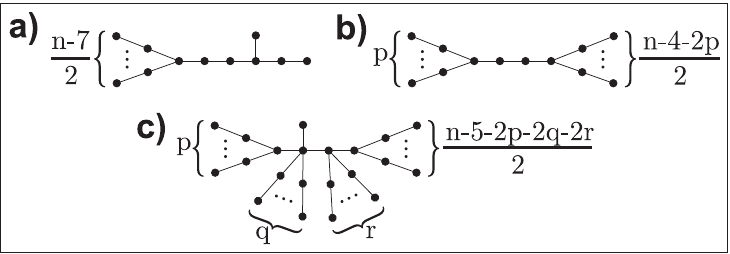}&\includegraphics{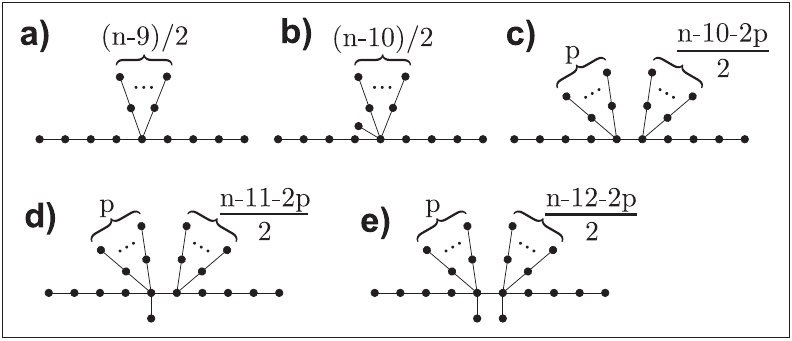}\\
Fig. 5&Fig. 6\\
\vphantom{A}& \\
\includegraphics{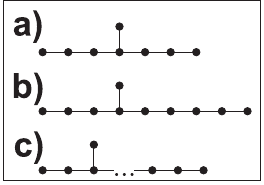}&\includegraphics{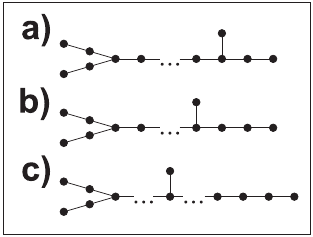}\\
Fig. 7&Fig. 8\\
\vphantom{A}& \\
\includegraphics{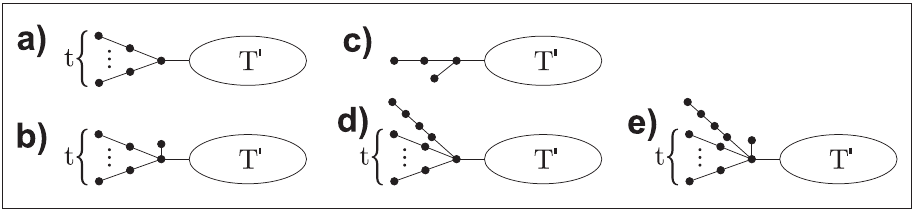}& \\
Fig. 9&
\end{tabular}

\begin{thebibliography}{1}
\bibitem{DainTp} Dainiak A.\,B., On the number of independent sets in trees of fixed diameter (in Russian) // in preparation
\bibitem{Frendp} Frendrup A., Pedersen A.\,S., Sapozhenko A.\,A., Vestergaard P.\,D., Merrifield-Simmons index and minimum Number of Independent Sets in Short Trees // to be published in Ars Combinatoria.
\bibitem{Pedp} Pedersen A.\,S., Vestergaard P.\,D., An upper bound on the number of independent sets in a tree // Ars Combinatoria. 2007. 84. P. 85-96
\bibitem{Saganpaper2} Sagan B.\,E. A note on independent sets in trees // SIAM J. Discr. Math. 1988. 1. P. 105-108
\bibitem{Wilfpaper} Wilf H.\,S. The number of maximal independent sets in a tree // SIAM J. Alg. Discr. Meth. 1986. 7. P. 125-130
\end{thebibliography}
\end{document}